\documentclass[11pt,reqno]{amsart}
\usepackage{latexsym,amsmath}
\usepackage{amsfonts}
\usepackage{amssymb}
\usepackage{fixmath}

\usepackage{fullpage}
\usepackage{graphicx}
\usepackage[colorlinks=true,citecolor=blue]{hyperref}
\usepackage[dvipsnames]{xcolor}
\usepackage{enumerate}

\usepackage[T1]{fontenc}
\usepackage{fourier}
\usepackage{bbm}

\usepackage[boxed, linesnumbered, noend, noline]{algorithm2e}
\SetKwInput{KwData}{Input}
\SetKwInput{KwResult}{Output}

\numberwithin{equation}{section}

\newcommand\ra{\to}
\renewcommand{\epsilon}{\eps}

\newtheorem{definition}{Definition}[section]

\newtheorem{remark}[definition]{Remark}
\newtheorem{theorem}[definition]{Theorem}
\newtheorem{lemma}[definition]{Lemma}
\newtheorem{proposition}[definition]{Proposition}

\newcommand\eps{\varepsilon}

\newcommand\GG{\mathbb{G}}

\newcommand\Var{\mathrm{Var}}

\def\?#1{}
\makeatletter
\def\whp{w.h.p\@ifnextchar-{.}{\@ifnextchar.{.\?}{\@ifnextchar,{.}{\@ifnextchar){.}{\@ifnextchar:{.:\?}{.\ }}}}}}
\makeatother

\makeatletter
\def\Whp{W.h.p\@ifnextchar-{.}{\@ifnextchar.{.\?}{\@ifnextchar,{.}{\@ifnextchar){.}{\@ifnextchar:{.:\?}{.\ }}}}}}
\makeatother

\newcommand{\Erdos}{Erd\H{o}s}
\newcommand{\Renyi}{R\'enyi}

\newcommand{\Bollobas}{Bollob\'as}

\newcommand\pr{\mathbb{P}}
\renewcommand\Pr{\pr}

\newcommand\Lem{Lemma}
\newcommand\Prop{Proposition}
\newcommand\Thm{Theorem}

\newcommand{\Eig[1]}{\mathrm{Eig}^{\ast} (#1)}

\def\pr{{\mathbb P}}

\begin{document}

	\title{Asymptotic normality of the giant component size in a random bipartite graph}
	\author{Dmitry Shabanov}
	\address{Dmitry Shabanov, Laboratory of Combinatorial and Geometric Structures, Moscow Institute of Physics and Technology, Russia}
	\author{Pavel Zakharov}
	\address{Pavel Zakharov, Department of Computer Science, TU Dortmund University, Germany}

	\begin{abstract}%
		This paper studies the giant component of the sparse random bipartite graph $\GG(n, n, p)$, where $p = c/n$ for a fixed constant $c > 1$. We prove that its size is asymptotically normal.
	\end{abstract}

	\maketitle

	\section{Introduction}

	\subsection{Main definitions}

	One of the classical random graph models is an \Erdos-\Renyi\ random graph $\GG(n,p)$.
	In this model each pair of vertices from an $n$-element set is joined by an edge independently with probability $p$.
	In other words, we perform a Bernoulli scheme on the edges of the complete graph $K_n$ on $n$ vertices.

	The model $\GG(n,p)$ is a central object in the theory of random graphs, but not the only one.
	Generalisations where the Bernoulli scheme is performed on the edges of a different base graph have been studied extensively.
	In this work the base graph is the complete bipartite graph $K_{n,n}$: it consists of two disjoint sets of $n$ vertices each (the two {parts}), all possible edges between the parts are present, and there are no edges inside the parts.
	By a random graph $\GG(n,n,p)$ we denote a random element, resulting from deleting each edge from $K_{n,n}$ independently with probability $1 - p$.

	\subsection{Known results}

	One of the most fundamental results in the theory of random graphs is the discovery of the ``double jump'' phenomenon by \Erdos\ and \Renyi\ \cite{ErdosRenyi}.
	Let $C_i(n) = C_i(n,p)$ be the size of the $i$th largest component of $\GG(n,p)$.
	\begin{theorem}[\cite{ErdosRenyi}]\label{thm:1}
		Let $p = c/n$, where $c > 0$ is fixed and does not depend on $n$.
		\begin{enumerate}
			\item If $c < 1$, then
			\[
			\frac{C_1(n)}{\ln n} \ra \alpha(c) = \frac{1}{c - \ln c - 1}, \quad n \to \infty
			\]
			in probability.

			\item If $c = 1$, then $C_1(n)$ is of order $n^{2/3}$ in probability:
			\[
			C_1(n) = \Theta_{\Pr}(n^{2/3}).
			\]
			In other words, for each growing function $w(n) \ra +\infty$,
			\[
			\Pr\left(n^{2/3}/w(n) \le C_1(n) \le n^{2/3} w(n)\right) \to 1 \text{ as } n \to \infty.
			\]

			\item If $c > 1$,
			\[
			\frac{C_1(n)}{n} \ra \beta(c), \quad n \to \infty
			\]
			in probability, where $\beta(c)$ is the unique solution of $\beta + e^{-\beta c} = 1$ on the interval $(0,1)$.
			In addition,
			\[
			\frac{C_2(n)}{\ln n} \ra \alpha(c) = \frac{1}{c - \ln c - 1}, \quad n \to \infty
			\]
			in probability.

		\end{enumerate}
	\end{theorem}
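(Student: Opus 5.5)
The plan is the classical exploration-process argument. I would couple the exploration of a component from a fixed vertex with a Galton--Watson branching process whose offspring distribution is $\mathrm{Bin}(n,c/n)$, approximately $\mathrm{Poisson}(c)$. I would explore by breadth-first search. At each step we reveal the edges from the current active vertex to the still-unexplored vertices. The number of new vertices is $\mathrm{Bin}(n-k,c/n)$, where $k$ is the number of vertices already touched. This count is stochastically dominated by $\mathrm{Bin}(n,c/n)$. As long as $k\le n^{1-\delta}$ (or $k\le \eps n$ with a small error in the parameter), it also dominates $\mathrm{Bin}(n-k,c/n)$.

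\emph{Subcritical case, $c<1$.} I would bound the total progeny by a first-moment and Chernoff estimate. The event that the component of $v$ has at least $k$ vertices implies that the random walk $S_t=1+\sum_{i\le t}(X_i-1)$, with $X_i\sim\mathrm{Bin}(n,c/n)$ i.i.d., stays positive up to time $k$. In particular $\sum_{i\le k}X_i\ge k-1$, and since $\sum_{i\le k}X_i\sim \mathrm{Bin}(kn,c/n)$, a Chernoff bound gives probability at most $\exp(-k(c-1-\ln c)+O(\ln k))$. A union bound over the $n$ vertices with $k=(\alpha(c)+\eps)\ln n$ yields the upper bound. For the lower bound I would use the second moment method on the number $Y$ of tree components of size $k=(\alpha(c)-\eps)\ln n$. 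Cayley's formula gives $\mathbb{E}Y\approx n\,k^{k-2}/k!\,(c/n)^{k-1}e^{-ck}\to\infty$, and $\Var Y=o((\mathbb{E}Y)^2)$ because disjoint trees are nearly independent.

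\emph{Critical case, $c=1$.} The upper bound follows from a martingale or optional-stopping estimate. The probability that the exploration from $v$ survives $k$ steps is $O(1/\sqrt{k}+k/n)$, so $\mathbb{E}\#\{v:|C(v)|\ge k\}=O(n/\sqrt k)$ for $k\le n^{2/3}$. Markov's inequality then shows that a component larger than $w n^{2/3}$ is unlikely. For the lower bound I would show, by a second moment on the number of vertices in components of size at least $n^{2/3}/w$, that such vertices exist w.h.p. Alternatively one can apply the standard random-walk scaling of the exploration, a Brownian motion with parabolic drift $-t^2/2$ on the $n^{2/3}$ time scale, which produces excursions of length $\Theta(n^{2/3})$.

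\emph{Supercritical case, $c>1$.} I would first show that w.h.p.\ there are no components with size in $[K\ln n, n^{2/3}]$, where $K$ is a large constant. For $k$ in that window, the walk staying positive means $\sum_{i\le k}X_i\ge k$ with mean $k(c-o(1))$, so the only obstruction is an early death. Survival of the walk to time $k$ forces at least about $(c'-1)k$ active vertices, where $c'$ is $c$ reduced by the depletion. The probability of dying in the window is therefore $e^{-\Omega(k)}$, and a union bound kills it. Next, the number $Z$ of vertices in "small" components (size at most $K\ln n$) satisfies $\mathbb{E}Z=n(1-\beta+o(1))$, because the extinction probability of a $\mathrm{Poisson}(c)$ tree is $1-\beta$, the solution of $1-\beta=e^{-c\beta}$. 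Moreover $\Var Z=o(n^2)$: the explorations from two vertices are almost independent when both stay small. Chebyshev then gives $Z/n\to1-\beta$. Finally I would show that all large components merge by a sprinkling argument. Write $p=p_1+p_2$ with $p_2=\omega/n^{3/2}$ negligible. Any two large components of size at least $n^{2/3}$ are joined by a $p_2$-edge with probability $1-\exp(-\omega n^{1/3})$, while sprinkling shifts $\beta$ only by $o(1)$. Hence $C_1/n\to\beta$. For $C_2$, the vertices outside the giant form a graph distributed like $\GG(n',c/n)$ with $n'\approx (1-\beta)n$. This is the discrete duality principle, made rigorous by conditioning on the giant's vertex set. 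Its parameter is $c'=c(1-\beta)<1$, and since $c'-\ln c'-1=c-\ln c-1$ by the identity $c(1-\beta)e^{-c(1-\beta)}=ce^{-c}$, part (1) gives $C_2/\ln n\to\alpha(c)$.

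The main obstacle is the supercritical "no middle components" gap together with the merging step, and in particular making the duality conditioning rigorous. I would first handle it via the gap lemma and then apply part (1)'s argument directly to $\GG(n,p)$ restricted to small components, counting trees of size $(\alpha\pm\eps)\ln n$ as before. The factor $e^{-ck}$ there becomes $e^{-ck}(1+o(1))$ times the probability of avoiding the giant, which reproduces the duality exponent.
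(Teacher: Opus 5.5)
The paper gives no proof of this theorem. It is quoted from \cite{ErdosRenyi} and used only as a black box, so your outline can be measured only against the standard literature. It is the usual exploration/branching-process proof. Parts (1) and (2), the gap lemma and the concentration of $Z$ are sound as sketches, with two minor corrections. First, the tree count should read $\mathbb{E}Y\approx n\,\frac{k^{k-2}}{k!}\,c^{k-1}e^{-ck}$, since $\binom{n}{k}$ contributes $n^k$. Second, in (2) the real input for the second-moment lower bound is $\Var Z_{\ge k}\le n\,\mathbb{E}|C(v)|$, where $Z_{\ge k}=\#\{v:|C(v)|\ge k\}$. This must be combined with $\mathbb{E}|C(v)|=O(n^{1/3})$, which comes from the depletion drift $-t/n$ of the exploration walk.

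The step that fails as written is the sprinkling. With $p_2=\omega/n^{3/2}$, two components of size $n^{2/3}$ are joined with probability only about $1-(1-p_2)^{n^{4/3}}\approx 1-\exp(-\omega n^{-1/6})$, not $1-\exp(-\omega n^{1/3})$. For $\omega=o(n^{1/6})$ this tends to $0$, so the merging claim is false as stated. The fix is to sprinkle more. Take $p_2=n^{-4/3+\delta}$ with $0<\delta<1/3$; this is still $o(1/n)$, so $np_1\to c$. Each pair of large components is then joined with probability at least $1-\exp(-n^{\delta})$, which survives a union bound over the at most $n^{2/3}$ pairs.

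There is a second issue with the sprinkling. It only merges the large components of $G_1=\GG(n,p_1)$. That gives $C_1\ge(\beta-o(1))n$, but it does not exclude a second component of size $\ge n^{2/3}$ in $\GG(n,p)$ assembled from $G_1$-small pieces. Your fallback for $C_2$ (gap lemma plus first-moment/tree counting) needs exactly this uniqueness, because the count only excludes sizes in $[(\alpha+\eps)\ln n,\,n^{2/3}]$.

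The duality route you set aside supplies uniqueness and is routine. Condition on $S$ being the vertex set of the largest component, with a fixed tie-breaking rule. Then the graph on $V\setminus S$ is $\GG(n-|S|,p)$ conditioned on having no component beating $S$. This event has probability $1-o(1)$ uniformly for $|S|\in[(\beta-\eps)n,(\beta+\eps)n]$, because $(n-|S|)p\le c(1-\beta+\eps)<1$. Your proof of part (1) works uniformly over $c'$ in compact subsets of $(0,1)$. Applying it gives both uniqueness of the giant and $C_2/\ln n\to\alpha(c(1-\beta))=\alpha(c)$.
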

	Thus when $p$ crosses $1/n$, the order of magnitude of $C_1(n)$ first changes from $\ln n$ to $n^{2/3}$, and then from $n^{2/3}$ to $n$.
	Moreover, in the latter case the component of linear size is unique, and all others are at most logarithmic in size.
	Therefore it makes sense to call the largest component in this regime ``giant''.

	The results of \Thm~\ref{thm:1} have been strengthened over the years.
	In particular, Stepanov established the asymptotic normality of the size of the giant component.
	\begin{theorem}[\cite{Stepanov}]\label{thm:2}
		Let $np \to c$, where $c > 1$ is fixed.
		Then
		\[
		\sqrt{n}\left(\frac{C_1(n)}{n} - \beta\right) \ra \mathcal{N}(0, \sigma^2), \quad n \to \infty
		\]
		in distribution, where $\beta = \beta(c)$ is the unique solution of $\beta + e^{-\beta c} = 1$ on the interval $(0,1)$, and
		\[
		\sigma^2 = \frac{\beta(1-\beta)}{(1-c(1-\beta))^2}.
		\]
	\end{theorem}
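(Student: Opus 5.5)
We will prove Stepanov's theorem through the standard exploration-process argument: represent the giant component as the first macroscopic excursion of a random walk, then apply a functional central limit theorem to that walk.

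\emph{Exploration process.} Explore $\GG(n,p)$ vertex by vertex with the usual breadth-first queue. At step $t$ we remove one active vertex and expose its edges to the $n-t-A_t$ currently neutral vertices; $A_t$ denotes the number of active vertices. When the queue empties we activate a fresh neutral vertex. The key observation is that the number of unexplored vertices $U_t$ evolves as a Markov chain with binomial decrements: $U_{t+1}\sim \mathrm{Bin}(U_t, 1-p)$. In particular, marginally $U_t \sim \mathrm{Bin}(n-1,(1-p)^t)$, provided we count restarts correctly (restarts change $U_t$ by one, which is negligible at the scale $\sqrt n$). Component completions occur exactly at the zeros of $Y_t = n - t - U_t$, the number of active vertices. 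The giant component then corresponds to the excursion of $Y$ spanning roughly $[o(n),\beta n]$. Its length equals $C_1(n)$ up to an additive $O(\ln n)$ in probability: this uses part 3 of Theorem~\ref{thm:1} to ensure that the small components explored before the giant one contribute $o(\sqrt n)$.

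\emph{Fluid limit and fluctuations.} We have $\mathbb E U_t \approx n e^{-ct/n}$, so $Y_{xn}/n \to f(x)=1-x-e^{-cx}$. This limit vanishes at $x=\beta$ with slope $f'(\beta)=-1+c e^{-c\beta}=-(1-c(1-\beta))<0$. For fluctuations we use the Markov binomial thinning structure. Writing $U_t=(1-p)^t(n-1)+M_t$, the process $(1-p)^{-t}U_t$ is a martingale, with variance near $t=\beta n$ equal to
\[
\Var U_{\beta n}=n e^{-c\beta}(1-e^{-c\beta})=n(1-\beta)\beta.
\]
Hence
\[
\frac{Y_{\beta n}-n f(\beta)}{\sqrt n}\ \Rightarrow\ \mathcal N\bigl(0,\beta(1-\beta)\bigr).
\]
This follows from the de Moivre--Laplace theorem on the exact binomial marginal, or from the martingale CLT. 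We also need tightness, i.e.\ that $\sup_{|x-\beta|<\delta}|Y_{xn}-nf(x)-(Y_{\beta n}-nf(\beta))| = o_{\Pr}(\sqrt n)$ as $\delta\to0$. This comes from Doob's inequality applied to increments of the martingale $M$.

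\emph{Locating the zero.} Put $T=C_1(n)$, the first zero of $Y$ after time $\eps n$, which lies near $\beta n$. Linearising,
\[
0=Y_T\approx nf(\beta)+(Y_{\beta n}-nf(\beta))+f'(\beta)(T-\beta n),
\]
so
\[
\frac{T-\beta n}{\sqrt n}\approx \frac{Y_{\beta n}-nf(\beta)}{\sqrt n\,(1-c(1-\beta))}.
\]
This converges to $\mathcal N(0,\sigma^2)$ with $\sigma^2=\beta(1-\beta)/(1-c(1-\beta))^2$, as claimed. Replacing $np=c$ by $np\to c$ requires only a continuity argument in $c$: the centering must be $\beta(np)$ up to $o(1/\sqrt n)$, or else one assumes $np-c=o(n^{-1/2})$.

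\emph{Main obstacle.} The delicate step is making the linearisation rigorous, namely showing that $Y$ has no zero in $[\eps n,(\beta-\delta)n]$ and crosses zero exactly once near $\beta n$. The first part follows from the law of large numbers ($f>0$ there) combined with a uniform fluctuation bound of order $O_{\Pr}(\sqrt n)$. Near $\beta n$, the negative drift $f'(\beta)n^{-1}\cdot\sqrt n$ over windows of length $K\sqrt n$ dominates the $o(\sqrt n)$ local oscillation provided by the tightness estimate. I would control this by a maximal inequality for $M$ on windows of length $K\sqrt n$, which gives fluctuations of order $n^{1/4}$, much smaller than $\sqrt n$.
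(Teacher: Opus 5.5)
The paper does not prove this statement. It quotes it from Stepanov, whose argument runs through exact asymptotics of the connectivity probability, and it only mentions the later Bollob\'as--Riordan proof via a martingale CLT. Your proposal is a correct, self-contained version of the random-walk route rather than the enumerative one.

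What distinguishes your version is that you avoid a martingale CLT altogether. Attach an independent $p$-coin to each pair (step, vertex). Then the number $\tilde U_t$ of vertices whose first $t$ coins are all tails is exactly $\mathrm{Bin}(n-1,(1-p)^t)$, and the true unseen count satisfies $\tilde U_t-R_t\le |U_t|\le \tilde U_t$, where $R_t$ is the number of restarts. So the Gaussian input is just de Moivre--Laplace at the single time $\lfloor\beta n\rfloor$, and the martingale $(1-p)^{-t}\tilde U_t$ enters only through Doob's inequality.

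This is much lighter than Stepanov's route and stays inside the exploration framework the paper already uses. It even adapts to a two-type exploration of $\GG(n,n,p)$. There the unseen counts in the two parts are independent thinnings indexed by the number of explored vertices in the opposite part, and linearising the two end conditions gives the variance $2\sigma^2$ of Theorem~\ref{thm:4} directly. Your caveat on $np\to c$ is also correct. Centring at $\beta(c)$ needs $\sqrt n\,(np-c)\to0$; otherwise you must centre at $\beta(np)$, and the same applies to Theorem~\ref{thm:4} as stated.

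A few steps need tightening, none of them fatal.

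\emph{The start time of the giant.} Part~3 of Theorem~\ref{thm:1} bounds each component explored before the giant by $O(\ln n)$, but it says nothing about how many such components there are. So it does not give $T_0=o_{\Pr}(\sqrt n)$ for the start time $T_0$ of the giant, which your identification of $C_1(n)$ with the end time requires. Your own tools fix this. Take a dyadic block $[2^k,2^{k+1}]\subset[L,\eps n]$ with $\eps$ small. There $\tilde Y_t=n-t-\tilde U_t$ has mean at least $(c-1)2^{k-1}$, and by Doob's inequality it deviates from its mean by that much with probability only $O(2^{-k})$. Summing over blocks, $\tilde Y$, and hence $A_t\ge\tilde Y_t$, has no zero on $[L,\eps n]$ with probability $1-O(1/L)$. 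This gives $T_0=O_{\Pr}(1)$, and also $R=O_{\Pr}(1)$ for the number of restarts, which the sandwich below needs.

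\emph{The zero near $\beta n$.} $Y$ does not cross zero ``exactly once'' there, since every small component after the giant produces a zero. What you need, and what $\tilde Y_t\le A_t\le\tilde Y_t+R$ gives, is this: the first zero after $\eps n$ lies between the first time after $\eps n$ with $\tilde Y\le0$ and the first time with $\tilde Y<-R$. Your linearisation puts both within $o_{\Pr}(\sqrt n)$ of $\beta n+\tilde Y_{\lfloor\beta n\rfloor}/(1-c(1-\beta))$.

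\emph{The drift term.} The drift over a window of length $K\sqrt n$ has size $(1-c(1-\beta))K\sqrt n$, not $f'(\beta)n^{-1}\sqrt n$. It is this drift that beats the $O(K^{1/2}n^{1/4})$ local oscillation.
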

	The original proof of Stepanov is rather involved and relies on the exact asymptotics of the probability that a random graph is connected.
	In 2012 \Bollobas\ and Riordan presented a shorter proof which is based on a martingale central limit theorem \cite{BollobasRiordanWalk}.

	The question of the limiting distribution of $C_1(n)$ in the critical case $c = 1$ remained open for a long time, until it was resolved by Aldous \cite{Aldous} in 1997.

	A similar phase transition in the component structure occurs in other models as well: in random hypergraphs~\cite{SchmidtPruzanShamir}, random subgraphs of the hypercube \cite{AjtaiKomlosSzemeredi}, and random subgraphs of pseudo-random graphs \cite{FriezeKrivelevichMartin}.
	However, these results only concern the first-order asymptotics (the ``law of large numbers'').
	Obtaining the precise limiting distribution turned out to be much more challenging; however, an analogue of \Thm~\ref{thm:2} was established for random hypergraphs \cite{BehrischCojaOghlanKang, BollobasRiordanHypergraph}.

	Component sizes of the random bipartite graph $\GG(n,n,p)$ have also been studied.
	In particular, Johansson proved an analogue of \Thm~\ref{thm:1} for this model.
	Denote by $L_i(n)$ the size of the $i$th largest component of $\GG(n,n,p)$.

	\begin{theorem}[\cite{Johansson}]\label{thm:3}
		Let $np \to c$ as $n \to \infty$, where $c > 0$ is fixed.
		\begin{enumerate}
			\item If $c < 1$, then there exists $A = A(c)$, such that
			\[
			\Pr(L_1(n) \le A \cdot \ln n) \to 1, \quad n \to \infty.
			\]

			\item If $c > 1$, then
			\[
			\frac{L_1(n)}{n} \ra 2\beta(c), \quad n \to \infty
			\]
			in probability, where $\beta(c)$ is the unique solution of $\beta + e^{-\beta c} = 1$ on the interval $(0,1)$.
			In addition there exists $B = B(c)$, such that
			\[
			\Pr(L_2(n) \le B \cdot \ln n) \to 1, \quad n \to \infty.
			\]
		\end{enumerate}
	\end{theorem}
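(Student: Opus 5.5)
This is Johansson's theorem (Theorem~\ref{thm:3}). We would prove it through the standard breadth-first exploration of components, comparing it with Galton--Watson branching processes whose offspring law is $\mathrm{Bin}(n,p)\approx\mathrm{Po}(c)$. The bipartite structure does not change the branching dynamics. A vertex in either part has $\mathrm{Bin}(n,p)$ neighbours in the opposite part, so the two-step process (part $V_1\to V_2\to V_1$) has offspring distribution equal to a sum of $\mathrm{Bin}(n,p)$ many $\mathrm{Bin}(n,p)$ variables. Its mean is $c^2$, and the one-step survival probability satisfies the same fixed-point equation $\beta+e^{-\beta c}=1$ as in $\GG(n,p)$.

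\emph{Subcritical case $c<1$.} We explore the component of a fixed vertex $v$ by BFS and couple it from above with a Galton--Watson tree with $\mathrm{Bin}(n,p)$ offspring. This works because at every step an active vertex has at most $n$ unexplored potential neighbours in the opposite part. If the component has size at least $k$, then the first $k$ steps of the exploration produce at least $k-1$ new vertices. This event is dominated by $\Pr(\mathrm{Bin}(kn,p)\ge k-1)\le e^{-\delta k}$ for some $\delta=\delta(c)>0$, by Chernoff, since the mean is $ck<k$. Choosing $A$ with $A\delta>1$ and taking a union bound over the $2n$ vertices gives $\Pr(L_1\ge A\ln n)=o(1)$.

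\emph{Supercritical case $c>1$.} We follow the Erd\H{o}s--R\'enyi/Karp argument in three steps.

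First, small components. Fix $k_-=B\ln n$. Using the lower coupling, valid while fewer than $k_+=n^{2/3}$ vertices have been explored (each active vertex then has at least $n-k_+$ candidates, giving $\mathrm{Bin}(n-k_+,p)$ offspring), together with the upper coupling, we show that $\Pr(|C(v)|\le k_-)\to 1-\beta$. This is the extinction probability $e^{-\beta c}$ of a $\mathrm{Po}(c)$ tree. We also show that $\Pr(k_-<|C(v)|<k_+)=o(n^{-1})$. The second bound comes from the same Chernoff estimate: if the exploration survives $k$ steps with $k_-\le k\le k_+$, the number of active vertices should be about $(c-1)k$, and it can hit zero only with probability $e^{-\Omega(k)}$. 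A union bound then shows that w.h.p.\ every component has size at most $B\ln n$ or at least $n^{2/3}$.

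Second, uniqueness. Run the explorations of all large components only up to $n^{2/3}$ vertices, each keeping $\Omega(n^{2/3})$ active vertices. For two such components, consider their active sets restricted to each part; at least one of the two parts contains $\Omega(n^{2/3})$ active vertices of each component. The probability that no edge joins these two cross-part sets is at most $(1-p)^{\Omega(n^{4/3})}=e^{-\Omega(n^{1/3})}$, which survives a union bound over pairs. Hence all large vertices lie in one component, and the bound on $L_2$ follows.

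Third, the size of the giant. Let $Y$ be the number of vertices in small components. By the first step, $\mathbb{E}Y=2n(1-\beta+o(1))$. A second-moment computation, using that two vertices' explorations of size $O(\ln n)$ are nearly independent, gives $\Var Y=o(n^2)$, so $Y/n\to 2(1-\beta)$ in probability by Chebyshev. Therefore $L_1/n\to 2\beta$.

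The main obstacle is the gap estimate $\Pr(k_-<|C(v)|<k_+)=o(1/n)$ in the bipartite setting. The remaining-vertex counts in the two parts deplete asymmetrically, so we would track the exploration in alternating two-step rounds. In each round we would bound the number of active vertices below by $\mathrm{Bin}((n-k_+)k,p)-k$, and check that the drift $(c-1)$ per step remains uniformly positive while at most $n^{2/3}$ vertices have been explored in each part.
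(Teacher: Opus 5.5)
The paper does not prove this statement; it quotes it from Johansson's thesis and uses it as a black box. There is no route to compare against, so I assess your argument on its own.

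Your plan is the standard one: Galton--Watson coupling, Chernoff bounds for the exploration walk, and the Erd\H{o}s--R\'enyi/Karp three-step scheme for $c>1$. The subcritical case, the dichotomy ``size $\le B\ln n$ or $\ge n^{2/3}$'', and the second-moment step all carry over essentially verbatim from $\GG(n,p)$.

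The obstacle you single out is not actually one. Whichever part the processed vertex lies in, the opposite part still has at least $n$ minus the total number of seen vertices unseen. So each step dominates $\mathrm{Bin}(n-O(k_+),p)$, and the number discovered in $k$ steps dominates $\mathrm{Bin}(k(n-O(k_+)),p)$ exactly as in $\GG(n,p)$. No two-step rounds are needed.

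The genuine gap is in the uniqueness step, which is the one place where bipartiteness really matters. Edges only join opposite parts, so the number of potential edges between the active sets $A_u, A_w$ of two large explorations is $|A_u\cap V_1|\,|A_w\cap V_2|+|A_u\cap V_2|\,|A_w\cap V_1|$.
\begin{itemize}
\item Pigeonhole only tells you that each active set has half its vertices in \emph{some} part. If both are concentrated in the same part, this count can be $o(n\ln n)$ or even zero, and the bound $(1-p)^{\Omega(n^{4/3})}$ is not available.
\item This configuration is not negligible. In BFS the active set is the unprocessed tail of level $d$ together with the discovered head of level $d+1$, and these lie in opposite parts. If the stopping time falls near a level boundary, almost all active vertices sit in one part.
\item Such events have only polynomially small probability, not $e^{-\Omega(n^{1/3})}$. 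They therefore cannot be absorbed into a union bound over $\Theta(n^2)$ pairs.
\end{itemize}

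The missing ingredient is to ensure, before trying to join two explorations, that each has $\Omega(n^{2/3})$ active vertices in \emph{both} parts. Two ways to do this:
\begin{itemize}
\item Process about half of the majority-side active vertices. By the same Chernoff bound this creates $\Omega(n^{2/3})$ new active vertices on the other side, with failure probability $e^{-\Omega(n^{2/3})}$, while keeping $\Omega(n^{2/3})$ on the original side.
\item Use an exploration that processes one active vertex from each part per step.
\end{itemize}
This balancing is exactly the idea the paper uses, at scale $n^{1/4}$, in the proof of Proposition~\ref{prop:2}. With it, the joining estimate is justified and the rest of your argument goes through.
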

	Component sizes of $\GG(n,n,p)$ were also studied when $np \ra 1$ \cite{DoErdeKang, Wang}.

	\subsection{New result}

	The main result of the present work strengthens \Thm~\ref{thm:3} and establishes the asymptotic normality of the size $L_1(n)$ of the giant component of $\GG(n,n,p)$ above the phase transition.

	\begin{theorem}\label{thm:4}
		Let $np \to c$ as $n \to \infty$, where $c > 1$ is fixed. Then
		\begin{equation}\label{eq:main}
			\sqrt{n}\left(\frac{L_1(n)}{n} - 2\beta\right) \rightarrow \mathcal{N}(0, 2\sigma^2), \quad n \to \infty
		\end{equation}
		in distribution,
		where $\beta = \beta(c)$ is the unique solution of $\beta + e^{-\beta c} = 1$ on the interval $(0,1)$, and
		\[
		\sigma^2 = \frac{\beta(1-\beta)}{(1-c(1-\beta))^2}.
		\]
	\end{theorem}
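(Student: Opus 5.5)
We follow the approach of Bollob\'as and Riordan \cite{BollobasRiordanWalk}: encode the component exploration of $\GG(n,n,p)$ as a random walk and derive the normality of $L_1(n)$ from a martingale central limit theorem applied to this walk. The only genuinely new feature is bipartiteness. Exploration alternates between the two parts, so we track two pools of unexplored vertices, $U_t$ in part $1$ and $W_t$ in part $2$. We use the standard breadth-first exploration with an active set. At each step we pick an active vertex $v$, say in part $1$, and reveal its edges to the unexplored vertices of part $2$. The number of newly found neighbours is $\mathrm{Bin}(|W_t|,p)$, conditionally on the past. When the active set empties, we start a new component from an unexplored vertex.

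\textbf{Reduction to a two-step walk.} Explore so that each round processes one vertex from each part. Equivalently, view the process through the ``square'' of the graph: each vertex of part $1$ reaches part $1$ again through part $2$. At time $t$, let $x_t = n-|U_t|$ and $y_t = n-|W_t|$ be the numbers of reached vertices in the two parts. The conditional increments have means close to $c(1-y_t/n)$ and $c(1-x_t/n)$. The fluid limit is the system $x' = 1-e^{-cy}$, $y'=1-e^{-cx}$ in rescaled form. By symmetry its fixed point is $x=y=\beta$, with $\beta+e^{-c\beta}=1$; this recovers Theorem~\ref{thm:3}. We first show, using Theorem~\ref{thm:3} for the $O(\ln n)$ bound on other components, that \whp the giant is the first component whose exploration lasts more than $n^{0.9}$ steps. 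Up to $o(\sqrt n)$ error, $L_1(n)$ is then the total number of vertices reached between the start and end of that excursion.

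\textbf{Central limit theorem.} Write $X_t = \sum_{s\le t}(\xi_s - \mathbb{E}[\xi_s\mid\mathcal F_{s-1}])$ for the martingale part of the exploration, now vector-valued with one coordinate per part. By the martingale CLT (Lindeberg conditions are easy because the increments are binomial with bounded mean), $n^{-1/2}X_{\lfloor \tau n\rfloor}$ converges to a Gaussian process. Its covariance is obtained by integrating the conditional variances along the fluid trajectory. The two coordinates are independent, since they involve disjoint edge sets, and each has variance $c(1-y)p$-type terms mirroring the single-part case. We then linearise the drift around the deterministic trajectory, so that fluctuations solve a linear ODE driven by this Gaussian noise. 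The excursion ends where the active-vertex count hits zero, near the point where $x=y=\beta$. The hitting-time fluctuation equals the noise at the endpoint divided by the derivative of the active count there. In one dimension this derivative is $1-c(1-\beta)$; here it is the corresponding eigenvalue of the $2\times 2$ linearisation $\begin{pmatrix}0 & c(1-\beta)\\ c(1-\beta) & 0\end{pmatrix}$ along the symmetric direction, which again gives $1-c(1-\beta)$.

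\textbf{Main obstacle and the variance computation.} The hardest step is controlling the hitting time of zero in the two-dimensional setting. The active set mixes both parts, and the walk must be shown to stay near the fluid path uniformly (a Freedman/Azuma bound on the martingale) while the endpoint is governed by a nondegenerate crossing. We handle this by using the fact that the sum $x+y$ is, to first order, a one-dimensional walk with drift vanishing linearly at $\beta$. As a cross-check on the variance, note that $L_1 = |C\cap V_1| + |C\cap V_2|$. Each part behaves like the giant of $\GG(n,c/n)$, since the fraction $1-\beta$ of vertices in part $1$ with no neighbour in the part-$2$ giant matches $e^{-c\beta}$. Their fluctuations combine to give total variance $2\sigma^2 n$. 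Getting exactly $2\sigma^2$, rather than $4\sigma^2$ or $\sigma^2$, requires the covariance between the two parts. We would therefore carry out the explicit integral and verify that it produces $2\beta(1-\beta)/(1-c(1-\beta))^2$.
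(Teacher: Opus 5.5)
The gap is exactly where you locate the main obstacle. The proposal never determines the limiting variance, and the linearisation you offer does not match the exploration you describe.

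With one vertex from each part per round, there is no cross-coupling in the drift. In each round the pair (number of active part-$1$ vertices, $|U_t|$) changes by $(-1+B_t,\,-B_t)$, where $B_t\sim\mathrm{Bin}(|U_t|,p)$ given the past. So this pair is exactly the exploration walk of $\GG(n,p)$. The same holds for part $2$ with $W_t$, and the two walks are independent until one active set empties. Consequently:
\begin{itemize}
\item The fluid limit is $x'=c(1-x)$, $y'=c(1-y)$. What you wrote are the fixed-point equations, not an ODE.
\item The off-diagonal matrix $M$ with entries $\mu:=c(1-\beta)$ is not the Jacobian of this drift. In any case its eigenvalue on $(1,1)$ is $\mu$; the value $1-\mu$ is the eigenvalue of $I-M$.
\end{itemize}
More importantly, in your round-by-round picture the giant's exploration does not end at one zero-crossing of a single walk. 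One part's active set empties first, while the other still holds $Z_n=\Theta_{\Pr}(\sqrt n)$ active vertices. What remains is a subcritical bipartite exploration from those $Z_n$ vertices, which adds $Z_n/(1-\mu)+o_{\Pr}(\sqrt n)$ vertices. That is the same order as the fluctuation you are trying to capture. Your cross-check also fails at this scale: $|C\cap V_1|$ does not fluctuate like an Erd\H{o}s--R\'enyi giant.

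The missing idea, and the paper's route, is the bookkeeping for that last phase. Let $T_1,T_2$ be the zero-hitting times of the two independent walks. Then $Z_n\approx(1-\mu)|T_1-T_2|$, so $L_1(n)=2\min(T_1,T_2)+Z_n/(1-\mu)+o_{\Pr}(\sqrt n)=T_1+T_2+o_{\Pr}(\sqrt n)$. The constant $2\sigma^2$ then follows from Stepanov's theorem, with no new martingale CLT. The same computation gives $|C\cap V_1|\approx(T_1+\mu T_2)/(1+\mu)$. Its variance is $\frac{1+\mu^2}{(1+\mu)^2}\sigma^2 n$, not $\sigma^2 n$, and the covariance between the two parts is $\frac{2\mu}{(1+\mu)^2}\sigma^2 n$. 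This is why the heuristic that each part is an Erd\H{o}s--R\'enyi giant cannot fix the constant.

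If you prefer a direct argument, count explored vertices rather than rounds:
\begin{itemize}
\item $L_1(n)=2n-U_{\mathrm{end}}-W_{\mathrm{end}}+o_{\Pr}(\sqrt n)$.
\item Near the end, every explored vertex, from either part, lowers the total active count by $1-\mu$ on average.
\item At $\beta n$ explored vertices of the opposite part, the fluctuations $\delta U,\delta W$ are independent with variance $\beta(1-\beta)n$ each. This is the conditional-variance integral propagated through the linearised drift $-c$; the raw integral would give $\beta n$.
\end{itemize}
Linearising then gives $\delta L_1=-(\delta U+\delta W)/(1-\mu)$, hence limiting variance $2\beta(1-\beta)/(1-\mu)^2=2\sigma^2$. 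Your matrix $M$ does legitimately appear here, as the linearisation of $k_1=n-U(k_2)$, $k_2=n-W(k_1)$ in the explored counts $k_1,k_2$. But this computation, together with uniform control near the crossing, is precisely what your outline leaves undone.
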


	\begin{remark}
		This result also follows from the recent, more general work of Clancy~\cite{Clancy}. However, the Russian-language version of the present paper~\cite{ZakharovShabanov} was published earlier.
	\end{remark}

	\section{Proof of \Thm~\ref{thm:4}}
	We are going to prove the following lemma, which implies \Thm~\ref{thm:4}.

	\begin{lemma}\label{lem:1}
		Let $\GG^{(1)}(n,p)$, $\GG^{(2)}(n,p)$ be independent \Erdos-\Renyi\ random graphs, and let $C^{(1)}(n)$, $C^{(2)}(n)$ be the sizes of their largest components.
		Then there exists a random variable $\xi_n$ such that for all $n$,
		\begin{equation}
			L_1(n) + \xi_n \overset{d}{=} C^{(1)}(n) + C^{(2)}(n),
		\end{equation}
		and $\xi_n/\sqrt{n} \ra 0$ in probability as $n \to \infty$.
	\end{lemma}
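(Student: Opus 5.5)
The plan is to obtain the lemma from a coupling of the two sides. If we show that $\sqrt{n}(L_1(n)/n-2\beta)$ and $\sqrt{n}\bigl((C^{(1)}(n)+C^{(2)}(n))/n-2\beta\bigr)$ have the same Gaussian limit, then we can put both on one probability space by the quantile transform. Concretely, take $U$ uniform on $(0,1)$, set $L_1=F_{L}^{-1}(U)$ and $C^{(1)}+C^{(2)}=F_{C}^{-1}(U)$, and define $\xi_n=C^{(1)}(n)+C^{(2)}(n)-L_1(n)$. Convergence in distribution to the same continuous limit gives uniform convergence of the normalized distribution functions, hence of their quantile functions on compacts of $(0,1)$, and therefore $\xi_n/\sqrt n\to0$ in probability.

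The right-hand side is handled by \Thm~\ref{thm:2}: by independence, its normalized version converges to $\mathcal N(0,2\sigma^2)$. So the real work is a direct CLT for $L_1(n)$ with centering $2\beta n+o(\sqrt n)$ and variance $2\sigma^2 n(1+o(1))$. Since we need only the marginal laws, we are free to prove this CLT by any method.

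To control $L_1(n)$, I would use the complement of the small components. By \Thm~\ref{thm:3}, \whp\ every non-giant component has at most $B\ln n$ vertices, so \whp\ $L_1(n)=2n-N$. Here $N$ is the number of vertices in components of size at most $B\ln n$. The expected number of vertices lying in unicyclic or more complex small components is $O(\mathrm{polylog}\,n)=o(\sqrt n)$, so we may replace $N$ by the number $T$ of vertices in small tree components. Write $T=\sum_{a,b}(a+b)\,Y_{a,b}$, where $Y_{a,b}$ counts tree components with $a$ vertices in part~1 and $b$ vertices in part~2. There are $a^{b-1}b^{a-1}$ spanning trees of $K_{a,b}$, so
\[
\mathbb E Y_{a,b}=\binom na\binom nb a^{b-1}b^{a-1}p^{a+b-1}(1-p)^{a(n-b)+b(n-a)-(a+b-1)} .
\]
Summing this should give $\mathbb E T=2(1-\beta)n+O(\mathrm{polylog}\,n)$. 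Via the identity $\sum_b$ over the two-type Borel distribution, the fixed point reduces to $\beta+e^{-c\beta}=1$ for each side. The covariance computation is similar. The pairs sharing no vertex contribute the correction from the factor $(1-p)^{-(\text{cross edges})}$; this now involves only cross pairs between parts, and the part-by-part sums factor into two copies of the $\GG(n,p)$ computation. The expected outcome is $\Var T=2\sigma^2 n(1+o(1))$.

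For asymptotic normality of $T$, I would truncate to components of size at most $K$ with $K\to\infty$ slowly. The tail beyond $K$ has variance $o(n)$ thanks to exponential decay in $a+b$, because $c>1$ makes the dual process subcritical. For the truncated sum, I would prove a CLT by the method of moments or cumulants: a factorial-moment computation for disjoint families of small trees shows that the $k$th cumulant is $O(n)$ for fixed $k$, hence $o(n^{k/2})$ for $k\ge3$.

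I expect the main obstacle to be the variance and cumulant bookkeeping. The mean and variance must be accurate to $o(\sqrt n)$ and $o(n)$ uniformly after summing over $a,b$ up to $B\ln n$, and one must verify that the resulting constant is exactly $2\sigma^2$, i.e.\ that the two parts contribute as independent copies of the $\GG(n,p)$ fluctuation. If that bookkeeping becomes unmanageable, the fallback is an exploration-walk argument in the style of \cite{BollobasRiordanWalk}: alternate between the parts, and apply the martingale CLT to the resulting two-dimensional walk.
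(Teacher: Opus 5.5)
Your route differs from the paper's, and in outline it is sound. You prove a CLT for $L_1(n)$ directly, by a Pittel-style census of small tree components, and then get the lemma from a quantile coupling. This inverts the paper's logic, where the lemma is the tool and Theorem~\ref{thm:4} the corollary. Given Theorem~\ref{thm:2} the two statements are equivalent, so nothing is circular, but in your version the lemma does no work.

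The paper instead runs the exploration in parallel on both parts. After a short start-up phase, the counts $(|A^{(i)}_t|,|U^{(i)}_t|)$, $i=1,2$, evolve exactly as two independent $\GG(n,p)$ exploration chains, because the two binomials at each step examine disjoint edge sets. So the giant has about $2\min(C^{(1)},C^{(2)})$ vertices by the time $\tau_n$ when one part runs dry. The $Z_n=O_{\Pr}(\sqrt n)$ leftover active vertices then seed subcritical trees of mean size $1/(1-c(1-\beta))$, which accounts for $|C^{(1)}-C^{(2)}|$.

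What each buys:
\begin{itemize}
\item The paper's argument needs no moment computations and explains structurally why the variance doubles. It uses Stepanov's theorem as a black box, and it avoids the two-dimensional martingale CLT of your fallback because the coordinates decouple exactly.
\item Your approach is self-contained and gives the mean and variance explicitly, but all the weight falls on bookkeeping. The $O(n)$ bound on higher cumulants is real work: isolation makes indicators on disjoint vertex sets dependent, so you need cancellation of joint cumulants to every order.
\end{itemize}

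On the variance, the mechanism you describe is not quite right. First, in $\mathbb E Y_{a,b}$ the exponent should be $a(n-b)+b(n-a)+ab-(a+b-1)$; you dropped the $ab$ internal pairs, which is harmless at this precision. More importantly, for an ordered pair of disjoint tree components the relative correction is
\[
\frac{\binom{n-a}{a'}\binom{n-b}{b'}}{\binom{n}{a'}\binom{n}{b'}}\,(1-p)^{-(ab'+a'b)}=1+\frac{c(ab'+a'b)-aa'-bb'}{n}+O\Bigl(\frac{(a+b+a'+b')^4}{n^2}\Bigr).
\]
The falling-factorial terms $-aa'-bb'$, which you omit, are essential.

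Set $\mu_{a,b}=a^{b-1}b^{a-1}c^{a+b-1}e^{-c(a+b)}/(a!\,b!)$ and $S=\sum_{a,b}a(a+b)\mu_{a,b}$. The symmetry $\mu_{a,b}=\mu_{b,a}$ then gives $\Var T/n\to 2S+2(c-1)S^2$. This does not factor into two $\GG(n,p)$ computations. It equals $2\sigma^2$ only because
\[
S=\sum_k k^2\cdot \frac{k^{k-2}c^{k-1}e^{-ck}}{k!}=\frac{1-\beta}{1-c(1-\beta)},
\]
both being $1-\beta$ times the mean total progeny of the dual $\mathrm{Po}(c(1-\beta))$ tree. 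With that, $S+(c-1)S^2=\sigma^2$, whereas dropping the $-aa'-bb'$ terms would give $2S+2cS^2$.
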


	Indeed, if \Lem~\ref{lem:1} holds, then \Thm~\ref{thm:2} implies that
	\[
	\sqrt{n}\left(\frac{C^{(1)}(n) + C^{(2)}(n)}{n} - 2\beta\right) \ra \mathcal{N}(0, 2\sigma^2)
	\]
	in distribution,
	as  $\GG^{(1)}(n,p)$ and $\GG^{(2)}(n,p)$ are independent.
	On the other hand, according to \Lem~\ref{lem:1}, the following sequence has exactly the same limit in distribution:
	\begin{equation}\label{eq:1}
		\sqrt{n}\left(\frac{L_1(n) + \xi_n}{n} - 2\beta\right) \overset{d}{=} \sqrt{n}\left(\frac{L_1(n)}{n} - 2\beta\right) + \frac{\xi_n}{\sqrt{n}}.
	\end{equation}
	Now \eqref{eq:1} together with $\xi_n/\sqrt{n} \ra 0$ establishes \eqref{eq:main}.

	Next we sketch the proof of \Lem~\ref{lem:1} using the component exploration process.

	\subsection{Component exploration process}
	For a random graph $\GG(n,p)$ with vertex set $V_n$ the following exploration process (see, e.g.,~\cite{BollobasRiordanHypergraph}) is used. At any moment every vertex is in one of three states: \emph{explored}, \emph{active}, or \emph{unseen}.
	\begin{enumerate}
		\item At each step $t=0,1,2,\dots$ there are three sets $S_t, A_t, U_t$ and a number $C_t$, where $S_t$ is the set of explored vertices, $A_t$ is the set of active vertices, $U_t$ is the set of unseen vertices, and $C_t$ is the number of components explored by time $t$. Initially we choose an arbitrary vertex $v$: $A_0 = \{v\}$, $S_0 = \emptyset$, $U_0 = V_n \setminus \{v\}$, $C_0 = 0$.

		\item Suppose $(S_t, A_t, U_t, C_t)$ have been constructed. If $|A_t| > 0$, then at step $t+1$ the first active vertex $v_t \in A_t$ is chosen, and we let $X_{t+1} = N(v_t) \cap U_t$, where $N(v)$ denotes the set of neighbours of a vertex $v$.
		Then
		\[
		S_{t+1} = S_t \cup \{v_t\}, \quad A_{t+1} = X_{t+1} \cup A_t \setminus \{v_t\}, \quad U_{t+1} = U_t \setminus X_{t+1}, \quad C_{t+1} = C_t.
		\]

		\item If $|A_t| = 0$ and $|U_t| > 0$, then the current component is fully explored. In this case at step $t+1$ the first unseen vertex $v_t \in U_t$ is chosen:
		\[
		S_{t+1} = S_t, \quad A_{t+1} = \{v_t\}, \quad U_{t+1} = U_t \setminus \{v_t\}, \quad C_{t+1} = C_t + 1.
		\]

		\item If $|A_t| = 0$ and $|U_t| = 0$, the process terminates.
	\end{enumerate}

	Note that the random variable  $|X_{t+1}|$, which is the number of newly added active vertices, has the conditional binomial distribution $\mathrm{Bin}(|U_t|, p)$, provided there is a current active vertex.

	For a bipartite random graph $\GG(n,n,p)$ we modify the process, such that it goes in parallel in both parts. By $V_n^{(1)}$ and $V_n^{(2)}$ denote the two parts of  $\GG(n,n,p)$.

	\begin{enumerate}
		\item At each step there are two quadruples, one for each part, $(S_t^{(i)}, A_t^{(i)}, U_t^{(i)}, C_t)$, $i=1,2$. We initialize them as follows: arbitrary $v \in V_n^{(1)}$, $A_0^{(1)} = \{v\}$, $A_0^{(2)} = \emptyset$, $S_0^{(i)} = \emptyset$, $U_0^{(1)} = V_n^{(1)} \setminus \{v\}$, $U_0^{(2)} = V_n^{(2)}$, $C_0 = 0$.

		\item If $|A_t^{(i)}| > 0$ for $i=1,2$, then at step $t+1$ we take $v_t^{(i)}$, the first active vertex of $A_t^{(i)}$, $i = 1,2$, and let $X_{t+1}^{(i)} = N(v_t^{(i)}) \cap U_t^{(3-i)}$.
		\[
		S_{t+1}^{(i)} = S_t^{(i)} \cup \{v_t^{(i)}\}, \quad A_{t+1}^{(i)} = X_{t+1}^{(3-i)} \cup A_t^{(i)} \setminus \{v_t^{(i)}\}, \quad U_{t+1}^{(i)} = U_t^{(i)} \setminus X_{t+1}^{(3-i)}, \quad C_{t+1} = C_t.
		\]

		\item If without loss of generality $|A_t^{(1)}| > 0$ but $|A_t^{(2)}| = 0$, then the component is not fully explored yet, and only neighbours of the first active vertex from $V_n^{(1)}$ will be added.
		Specifically, let $v_t^{(1)}$ be the first active vertex from $A_t^{(1)}$, and $X_{t+1}^{(1)} = N(v_t^{(1)}) \cap U_t^{(2)}$:
		\[
		S_{t+1}^{(1)} = S_t^{(1)} \cup \{v_t^{(1)}\}, \quad A_{t+1}^{(1)} = A_t^{(1)} \setminus \{v_t^{(1)}\}, \quad U_{t+1}^{(2)} = U_t^{(2)} \setminus X_{t+1}^{(1)}, \quad A_{t+1}^{(2)} = X_{t+1}^{(1)}.
		\]
		All the other elements of both quadruples remain unchanged. The case $|A_t^{(1)}|=0$, $|A_t^{(2)}|>0$ is symmetric.

		\item If $|A_t^{(i)}| = 0$ for $i=1,2$ and $|U_t^{(1)} \cup U_t^{(2)}| > 0$, then the current component is fully explored. At step $t+1$ we pick the first unseen vertex $v_t^{(i)} \in U_t^{(i)}$, where $U_t^{(i)}$ is nonempty, and
		\[
		A_{t+1}^{(i)} = \{v_t^{(i)}\}, \quad U_{t+1}^{(i)} = U_t^{(i)} \setminus \{v_t^{(i)}\}, \quad C_{t+1} = C_t + 1.
		\]
		The other quantities remain unchanged.

		\item If $|A_t^{(1)} \cup A_t^{(2)}| = 0$ and $|U_t^{(1)} \cup U_t^{(2)}| = 0$, then the exploration process terminates.
	\end{enumerate}

	Now we describe the proof strategy for \Lem~\ref{lem:1}. Let $T_0$ be the time at which the described process starts exploring the giant component of $\GG(n,n,p)$. First we show that this happens fast enough.

	\begin{proposition}\label{prop:1}
		\[
		\Pr\left(T_0 \le (\ln n)^2\right) \to 1, \quad n \to \infty.
		\]
	\end{proposition}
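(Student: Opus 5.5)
The plan is to bound $T_0$ by controlling two things: how many components are explored before the giant one, and how long each of these explorations takes. By \Thm~\ref{thm:3}, with probability tending to one every non-giant component of $\GG(n,n,p)$ has at most $B\ln n$ vertices. Each step of the bipartite process either moves at least one vertex into an explored set or starts a new component. Hence exploring a non-giant component takes at most $B\ln n + 1$ steps. It therefore suffices to show that, with probability tending to one, the number $K$ of components fully explored before the process reaches the giant one satisfies $K \le (\ln n)^2/(B\ln n+1)$. Concretely, it is enough to show $K \le \ln n/(2B)$ \whp

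To bound $K$, I will use the fact that each new component is started at the first unseen vertex of the current unseen set. Whether the component of a given vertex is the giant one depends only on the graph, not on the order of exploration. So I will argue that the starting vertices $w_1, w_2, \dots$ of successive components behave like fresh vertices. Before the giant component is hit, at most $K(B\ln n+1) = O((\ln n)^2)$ vertices have been removed. The remaining graph, conditioned on the explored small components, is still a bipartite random graph $\GG(n',n'',p)$ with $n',n''=n-O((\ln n)^2)$. In it, the vertex $w_j$ lies in the giant component with probability bounded below by some $\beta'>0$, close to $\beta$.

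One clean way to make this rigorous is to avoid conditioning subtleties altogether. The set $W$ of vertices lying in components of size at most $B\ln n$ has size $(1-\beta+o(1))\cdot 2n$ \whp (from \Thm~\ref{thm:3}). The exploration starts new components at the lowest-indexed unseen vertices. So the event $K \ge k$ forces the first $k$ starting vertices all to lie in $W$. Since each small component contains at most $B\ln n$ vertices, this in turn forces many vertices among the first $k B\ln n$ indices of each part (say those in $V_n^{(1)}$) to lie in small components. I then need a lemma: for a fixed set $I$ of $m=o(\sqrt n)$ labelled vertices, the probability that all of them lie in small components is at most $(1-\beta+o(1))^{m/(B\ln n)}$ or similar. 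This could be shown by exploring from them sequentially and using the branching-process survival probability $\beta$ for each fresh start, since only $O(m\ln n)$ vertices are used.

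The main obstacle is exactly this step: the fresh-start estimate, namely that each new start has probability at least $\beta/2$ of growing into the giant component, uniformly over the history. I would try to prove it by coupling the bipartite exploration from a fresh vertex with a Galton–Watson process with $\mathrm{Poisson}(c-\eps)$ offspring (alternating parts does not change the offspring law asymptotically). Such a process survives with probability $\beta(c-\eps)>0$. Survival beyond $B\ln n$ vertices then forces membership in the giant component, by \Thm~\ref{thm:3} \whp. With this estimate, $\Pr(K\ge k)\le (1-\beta/2)^k + o(1)$. Choosing $k=\ln n/(2B)\to\infty$ gives $T_0\le k(B\ln n+1)\le(\ln n)^2$ \whp
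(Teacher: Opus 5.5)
Your proposal is correct and follows essentially the same route as the paper. On the event of \Thm~\ref{thm:3} that every non-giant component has at most $B\ln n$ vertices, each component explored before the giant costs at most $B\ln n+1$ steps, so $T_0>(\ln n)^2$ forces about $\ln n/B$ consecutive fresh starts that miss the giant; since each fresh start hits it with conditional probability bounded away from zero whatever the past, this has probability $(1-\beta+o(1))^{\Theta(\ln n)}+o(1)\to 0$.

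Your version actually justifies what the paper only asserts: the unseen vertices still carry an independent $\GG(n',n'',p)$ with $n',n''=n-O((\ln n)^2)$, a branching-process lower bound applies, and a union bound with the exceptional event of \Thm~\ref{thm:3} finishes. The detour through the set $W$ and fixed index sets is unnecessary, and as written it would not close. The event $K\ge k$ only guarantees that roughly the first $k$ indices of a part (not $kB\ln n$ of them) lie in small components, and for such a set an exponent $m/(B\ln n)$ gives no decay. Keep the direct sequential-conditioning argument.
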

	For the sake of the analysis we are interested in a moment of time at which both $A_t^{(1)}$ and $A_t^{(2)}$ are large enough. Let $t_0 = \lceil (1 + (c-1)/8) n^{1/4} \rceil$; the next proposition shows that this choice works.

	\begin{proposition}\label{prop:2}
		There exists such $\alpha = \alpha(c) > 0$, that
		\[
		\Pr\left(|A_{t_0}^{(1)}| \ge \alpha n^{1/4}, \; |A_{t_0}^{(2)}| \ge \alpha n^{1/4}\right) \to 1, \quad n \to \infty.
		\]
	\end{proposition}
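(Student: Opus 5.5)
We will show that by time $t_0$ the process has entered the giant component and that, from $T_0$ on, both active sets grow essentially linearly in $t$. Since Proposition~\ref{prop:1} gives $T_0 \le (\ln n)^2 = o(n^{1/4})$ with high probability, the process spends at least $t_0 - (\ln n)^2 = (1+(c-1)/8 - o(1))n^{1/4}$ steps inside the giant component before time $t_0$.

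In the early phase, up to time $t_0$, at most $O(n^{1/4})$ vertices of each part have been removed from $U^{(i)}$ (with high probability, using crude bounds on the number of discovered vertices). Hence $|U_t^{(i)}| \ge n - O(n^{1/4})$. Each $|X_{t+1}^{(i)}|$ is then conditionally binomial with mean $(1-o(1))c$. We couple these increments from below by independent $\mathrm{Bin}(n - Kn^{1/4}, p)$ variables, valid on the event that the explored sets stay below $Kn^{1/4}$; this event itself holds with high probability by an upper coupling with $\mathrm{Bin}(n,p)$ and Chernoff.

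When both active sets are nonempty, one step removes one vertex from each $A^{(i)}$ and adds $X^{(3-i)}_{t+1}$ to $A^{(i)}$. So the drift of each $|A^{(i)}|$ is about $c-1>0$ per step. The process in each part is a random walk with positive drift $c-1$ started inside the giant component. The main point is to rule out the walk returning to zero; returning to zero would in any case mean the component was small, contradicting that we are in the giant. To make this quantitative, write $|A_t^{(i)}| \ge 1 + \sum_{s\le t}(Y_s^{(3-i)} - 1) - (\text{corrections})$, where the $Y_s$ are i.i.d.\ with mean $c - o(1)$. By Chebyshev or Hoeffding applied to the $\sim n^{1/4}$ summands, with high probability
\[
|A_{t_0}^{(i)}| \ge (c-1)(t_0 - T_0)/2 \ge \alpha n^{1/4}.
\]

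The main obstacle is the asymmetric steps (case 3), where one active set is empty and only one part is processed. These occur mainly at the start of a component, since the initial vertex lies in $V_n^{(1)}$ and $A^{(2)}$ starts empty. I would handle them by conditioning on the event $\{T_0\le (\ln n)^2\}$ and showing that, after $T_0$, the number of times either $A^{(i)}$ hits $0$ is $O(1)$, or even zero once both sets exceed $(\ln n)^2$. This follows from a gambler's-ruin estimate for a positive-drift walk: starting from height $h$, the probability of ever hitting $0$ is at most $e^{-\theta h}$ for some $\theta=\theta(c)>0$, obtained via the exponential martingale $e^{-\theta |A_t|}$. Each such asymmetric step shifts the balance between the two parts by at most one step of drift, costing $O(1)$, so these steps do not affect the linear lower bound. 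The constant $1+(c-1)/8$ in $t_0$ leaves enough slack to absorb the $(\ln n)^2$ delay and these $O(1)$ losses. I expect the delicate bookkeeping to be the coupling that keeps the two parts' increments conditionally independent.
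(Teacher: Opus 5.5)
\textbf{There is a genuine gap, located at the step you flag as the main obstacle.}

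Your control of the asymmetric steps treats each $|A^{(i)}_t|$ as a positive-drift walk, but it is not one. Part $i$ gains vertices only when a vertex of part $3-i$ is processed. So whenever $A^{(3-i)}_t=\emptyset$, the set $A^{(i)}$ loses one vertex per step and gains nothing.

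Nor does one part emptying contradict being in the giant. Only $A^{(1)}_t=A^{(2)}_t=\emptyset$ ends a component, and case~3 is forced at the very start, since $A^{(2)}_{T_0}=\emptyset$.

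The bound $e^{-\theta h}$ can therefore be used only jointly: run both lower-bounding walks up to the first time either set empties. It also applies only once \emph{both} sets already have height $h\gg\ln n$. You give no argument that this state is reached within $o(n^{1/4})$ steps after $T_0$. So the claim that there are $O(1)$ zero hits is unsupported, and with it your bound on the number of case-3 steps and the total ``correction''. That claim is the real content of the proposition. A genuinely two-dimensional argument might establish it, but none is given.

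\textbf{The missing idea}, which is how the paper proceeds, is to avoid the per-part analysis in the early phase.
\begin{enumerate}
\item The total $|A^{(1)}_t|+|A^{(2)}_t|$ \emph{is} a positive-drift walk, and it cannot vanish while the giant is being explored. A Chernoff bound for the sequentialised exploration, with $o(1/n)$ error so that one can union-bound over times, gives that whp at $t_2=\lceil n^{1/4}\rceil$ this total is at least $(c-1)t_2/2$.
\item Hence one part, say the first, has at least $(c-1)t_2/4$ active vertices.
\item During the next $(c-1)t_2/8$ steps this part loses at most one vertex per step. So it stays nonempty and is processed at every step.
\item Therefore $|A^{(2)}_s|-|A^{(2)}_{s-1}|\ge |X^{(1)}_s|-1$ at every such step, whether or not $A^{(2)}$ is empty, and Chernoff gives $|A^{(2)}_{t_0}|\ge (c-1)^2t_2/16$.
\end{enumerate}
This balancing phase is what the factor $1+(c-1)/8$ in $t_0$ is for; it is not slack for the $(\ln n)^2$ delay.

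\textbf{A smaller point:} $T_0$ is not a stopping time, because whether the current component is the giant depends on the future. So you should not condition on $\{T_0\le(\ln n)^2\}$ and then treat later increments as independent binomials. Use bounds that are uniform in the starting time instead, as the paper does by summing from $t_1=\lceil(\ln n)^2\rceil$.
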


	Further let $\tau_n = \min\{t : t \ge t_0, \; |A_t^{(1)}| = 0 \text{ or } |A_t^{(2)}| = 0\}$ be the first moment of time after $t_0$ at which one of the two parts has no active vertices left.
	Note that for any $t \in (t_0, \tau_n)$
	\[
	|A_t^{(i)}| = |A_{t-1}^{(i)}| - 1 + |X_t^{(3-i)}|, \quad i = 1,2.
	\]

	Next, the random variable $|X_t^{(3-i)}|$ has the conditional binomial distribution $\mathrm{Bin}(|U_t^{(i)}|, p)$, where
	\[
	|U_t^{(i)}| = n - \sum_{j=t_0}^{t} |X_j^{(3-i)}| - |A_{t_0}^{(i)} \cup S_{t_0}^{(i)}|.
	\]
	Here $A_{t_0}^{(i)} \cup S_{t_0}^{(i)}$ is the set of either active or explored vertices at time $t_0$ in the $i$th part.
	Clearly, $|S_{t_0}^{(i)}| \le t_0 = o(\sqrt{n})$, and the size of
	$A_{t_0}^{(i)}$  can be bounded using Markov's inequality.
	\begin{proposition}\label{prop:3}
		For every $i = 1,2$
		\[
		\Pr\left(|A_{t_0}^{(i)}| \le n^{1/3}\right) \to 1, \quad n \to \infty.
		\]
	\end{proposition}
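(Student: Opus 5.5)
The plan is to bound $\mathbb{E}|A_{t_0}^{(i)}|$ and then apply Markov's inequality, as the text preceding the statement suggests. The key observation is a deterministic domination. Let $\tilde X_s^{(i)}$ be the set of vertices that step $s$ adds to $A^{(i)}$: for a step of type (2) or (3) this is the set of new neighbours of the processed vertex of the opposite part, and for a step of type (4) it is the single new root. Every vertex ever active in part $i$ up to time $t_0$ entered through one of these sets, so
\[
|A_{t_0}^{(i)}| \le 1 + \sum_{s=1}^{t_0} |\tilde X_s^{(i)}|,
\]
where the $1$ accounts for the initial vertex $v$. It therefore suffices to control the expected total number of vertices added to part $i$ during the first $t_0$ steps.

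Conditionally on the history up to step $s-1$, the new-neighbour set is distributed as $\mathrm{Bin}(|U_{s-1}^{(i)}|,p)$. This holds because the edges from the processed vertex to currently unseen vertices have never been examined: each vertex is processed at most once, and edges are revealed only when an endpoint is processed. Since $|U_{s-1}^{(i)}|\le n$ and $np\to c$, we get $\mathbb{E}\bigl[|\tilde X_s^{(i)}| \mid \mathcal{F}_{s-1}\bigr] \le np + 1 \le c+2$ for large $n$; the extra $1$ covers a possible new root in step (4). Taking expectations and summing over $s$ gives
\[
\mathbb{E}|A_{t_0}^{(i)}| \le 1 + (c+2)t_0 = O(n^{1/4}),
\]
because $t_0=\lceil (1+(c-1)/8)n^{1/4}\rceil$.

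Markov's inequality then yields
\[
\Pr\bigl(|A_{t_0}^{(i)}| > n^{1/3}\bigr) \le \frac{O(n^{1/4})}{n^{1/3}} = O(n^{-1/12}) \to 0,
\]
which is the claim for each $i=1,2$.

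The only point that needs care is the conditional binomial claim, that is, that revealed edges are never reused. One must check this against the parallel step (2), where vertices in both parts are processed simultaneously. In that step the two newly examined edge sets go from $v_t^{(1)}$ to $U_t^{(2)}$ and from $v_t^{(2)}$ to $U_t^{(1)}$. They are disjoint, because an edge $v_t^{(1)}v_t^{(2)}$ would join two active vertices, and active vertices are not in the unseen sets. Even without the exact distribution, stochastic domination by $\mathrm{Bin}(n,p)$ is enough, and it also follows from the cruder bound that each processed vertex has degree distributed as $\mathrm{Bin}(n,p)$ in $\GG(n,n,p)$. To use that bound one applies Wald-type reasoning to the at most $t_0$ processed vertices, which are chosen adaptively but whose degrees into the unseen sets are not yet revealed when they are chosen.
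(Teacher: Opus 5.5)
Your proof is correct and is the paper's argument: the per-step additions to $A^{(i)}$ are conditionally dominated by $\mathrm{Bin}(n,p)$, so $\mathbb{E}|A_{t_0}^{(i)}| = O(t_0) = O(n^{1/4})$, and Markov's inequality finishes. You are slightly more careful than the paper, which writes $np\cdot t_0$ without mentioning the initial vertex or the roots from step (4) and does not justify that the examined edges are fresh; your closing aside is loose, since a discovered vertex's full degree is size-biased and not conditionally $\mathrm{Bin}(n,p)$, but nothing in the argument depends on it.
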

	Thus for $t \in (t_0, \tau_n)$ our process looks like a parallel exploration of two random graphs $\GG(n,p)$.
	Moreover, with $(S_{t_0}^{(i)}, A_{t_0}^{(i)}, U_{t_0}^{(i)}, C_{t_0})$ fixed for $i = 1,2$, we have two independent processes, i.e.\ in effect we explore the giant components of two independent random graphs $\GG^{(1)}(n,p)$, $\GG^{(2)}(n,p)$.
	Since at time $t_0$ we already have many active vertices in both parts (more than the total number of vertices in non-giant components, according to \Thm~\ref{thm:1}), we conclude that $\tau_n$ coincides in distribution with $\min(C^{(1)}(n), C^{(2)}(n))$ up to an error of order $o_{\Pr}(\sqrt{n})$.

	Now let us understand how  $L_1(n)$ and $\tau_n$ are connected.
	Clearly all the vertices considered between times $T_0$ and $\tau_n$ will go into the giant component, and there are between $2\tau_n - 2t_0$ and $2\tau_n$ of them.
	However, it is possible that in one part there are still active vertices left.
	Denote their number by $Z_n$, and let $\sigma_n$ be the number of vertices that will be added to the giant component after step $\tau_n$.
	The next proposition explains the connection between these random variables.
	\begin{proposition}\label{prop:4}
		\[
		\frac{\sigma_n - Z_n \frac{1}{1-c(1-\beta)}}{\sqrt{n}} \ra 0
		\]
		in probability as $n \ra +\infty$.
	\end{proposition}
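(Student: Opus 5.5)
At time $\tau_n$ one part, say the first, has no active vertices, while the second part has $Z_n$ active vertices. The first part still has $u = |U^{(1)}_{\tau_n}| = n(1-\beta) + O_{\Pr}(\sqrt n)$ unseen vertices. Indeed, $\tau_n$ is within $o_{\Pr}(\sqrt n)$ of $\min(C^{(1)}(n),C^{(2)}(n)) = \beta n + O_{\Pr}(\sqrt n)$ by \Thm~\ref{thm:2}, and $|U^{(1)}_t| = n - t - |A^{(1)}_t| + O(t_0)$. The part with active vertices keeps $n(1-\beta)+O_{\Pr}(\sqrt n)$ unseen vertices as well. The same estimates give $Z_n = O_{\Pr}(\sqrt n)$, since $Z_n$ is the difference of two parallel walks that both hit zero near $\beta n$, each with $\sqrt n$ fluctuations.

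After $\tau_n$ the process continues exploring the giant component from these $Z_n$ active vertices. Each active vertex in part 2 spawns $\mathrm{Bin}(u,p)$ new vertices in part 1, and these spawn vertices in part 2, and so on. The unseen sets are essentially the complement of a giant already almost fully explored, so this is a subcritical bipartite branching process. Its one-step mean is $c(1-\beta)$ per generation, with $c(1-\beta)<1$ because $\beta$ solves $\beta+e^{-\beta c}=1$. The expected total progeny of a single ancestor, counting vertices in both parts, is therefore $1/(1-c(1-\beta))$, to leading order. This is the same quantity that appears in $\sigma^2$ in \Thm~\ref{thm:2}.

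The main step is a coupling. I would sandwich the exploration after $\tau_n$ between two branching processes with offspring laws $\mathrm{Bin}(n(1-\beta) \pm n^{1/2}\omega(n), c/n)$, where $\omega\to\infty$ slowly. This is valid while the total number of vertices discovered after $\tau_n$ is at most $n^{1/2}\omega$, because depleting the unseen sets by that many vertices only shifts the parameter inside the window. Started from $Z_n$ ancestors, the total progeny $\sigma_n$ is a sum of $Z_n$ i.i.d.\ subcritical total progenies. These have finite variance, since subcritical Galton--Watson total progeny with Poisson-like offspring has exponential tails. Conditioning on $Z_n = O_{\Pr}(\sqrt n)$ and applying Chebyshev gives
\[
\sigma_n = Z_n\, m_\pm + O_{\Pr}(n^{1/4}),
\]
where $m_\pm$ is the mean progeny of the upper or lower coupling. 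Since $m_\pm = \frac{1}{1-c(1-\beta)} + O(\omega n^{-1/2})$ and $Z_n=O_{\Pr}(\sqrt n)$, we get $\sigma_n - Z_n/(1-c(1-\beta)) = O_{\Pr}(\omega) + O_{\Pr}(n^{1/4}) = o_{\Pr}(\sqrt n)$. The same bound shows that $\sigma_n = O_{\Pr}(\sqrt n) \le n^{1/2}\omega$ \whp, so the coupling remains valid throughout.

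I expect the main obstacle to be the coupling and concentration bookkeeping. In particular, I must check that the unseen counts at $\tau_n$ really equal $n(1-\beta)+O_{\Pr}(\sqrt n)$ in both parts, which uses \Prop~\ref{prop:3}, $t_0=o(\sqrt n)$ and \Thm~\ref{thm:2}. I must also handle the alternating-part structure of the branching process correctly. If the parts have unseen sizes $u_1,u_2$, the per-vertex mean progeny is a geometric series in $pu_1\cdot pu_2$ together with the one-step term. Both $pu_1$ and $pu_2$ equal $c(1-\beta)+o(1)$, so the total mean collapses to $1/(1-c(1-\beta))$ to the required $O(n^{-1/2}\omega)$ precision.
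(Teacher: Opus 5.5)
Your proposal is correct and follows essentially the same route as the paper. Both arguments write $\sigma_n$ as a sum over the $Z_n$ active vertices of subcritical cluster sizes whose mean tends to $1/(1-c(1-\beta))$ and whose variance is bounded, apply Chebyshev conditionally on $Z_n$, and absorb the $o(1)$ error in the means using $Z_n=O_{\Pr}(\sqrt n)$. Your sandwich between branching processes with offspring $\mathrm{Bin}(n(1-\beta)\pm\omega\sqrt n,\,p)$ makes rigorous the independence of the $\xi_j(n)$ (despite depletion of the unseen sets), which the paper only asserts. One small correction: under $np\to c$ rather than $p=c/n$, the error in $m_\pm$ is $o(1)$ rather than $O(\omega n^{-1/2})$, which still suffices.
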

	It remains to study the joint distribution of $(Z_n, \tau_n)$. Denote by $Y_n$ the number of active vertices remaining in one of the random graphs $\GG^{(i)}(n,p)$ at the moment when the exploration of the giant component in the other one is complete.
	\begin{proposition}\label{prop:5}
		There exist random variables $\eta_n'$, $\eta_n''$ such that for all $n$
		\[
		(\tau_n + \eta_n', \; Z_n + \eta_n'') \overset{d}{=} \left(\min(C^{(1)}(n), C^{(2)}(n)), \; Y_n\right).
		\]
		Moreover, $(|\eta_n'| + |\eta_n''|)/\sqrt{n} \ra 0$ in probability as $n \ra +\infty$.
	\end{proposition}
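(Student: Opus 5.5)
The plan is a coupling. On a common probability space we build the bipartite process for $t\ge t_0$ together with two independent \Erdos-\Renyi\ exploration processes, so that the two agree until time $\tau_n$ except on an event of probability $o(1)$. We then read off both $(\tau_n, Z_n)$ and $(\min(C^{(1)}(n),C^{(2)}(n)), Y_n)$ from the coupled processes. We set $\eta_n', \eta_n''$ equal to the differences on the good event. On the bad event we redefine them so that the identity in distribution holds exactly; since the bad event has probability $o(1)$, this does not affect convergence in probability.

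\emph{Step 1 (the initial segment).} We condition on $\mathcal F_{t_0}$, the state of both quadruples at time $t_0$. By Propositions~\ref{prop:2}, \ref{prop:3} and the trivial bound $|S^{(i)}_{t_0}|\le t_0$, w.h.p.
\[
\alpha n^{1/4}\le |A^{(i)}_{t_0}|\le n^{1/3}, \qquad |S^{(i)}_{t_0}|=O(n^{1/4}).
\]
By Proposition~\ref{prop:1}, the exploration of the giant has already begun by time $t_0$.

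\emph{Step 2 (the coupling).} Next we build graphs $\GG^{(1)}(n,p)$ and $\GG^{(2)}(n,p)$ on vertex sets identified with $V_n^{(1)}$ and $V_n^{(2)}$. In $\GG^{(i)}$ the edge $uw$ is declared present when the bipartite edge from the currently processed vertex of the other part to $w\in U^{(i)}$ is present. Because each bipartite edge is examined at most once, and different active vertices query disjoint pairs, the revealed neighbourhoods $X^{(3-i)}_t$ are conditionally $\mathrm{Bin}(|U^{(i)}_t|,p)$ and independent across $i$. So for $t\in(t_0,\tau_n)$ each coordinate evolves exactly like a standard exploration of $\GG(n,p)$ started from $A^{(i)}_{t_0}$ active and $S^{(i)}_{t_0}$ explored vertices. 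The unexamined pairs of $\GG^{(i)}$ are filled with fresh independent edges. The only discrepancy is the initial configuration: the exploration of $\GG^{(i)}$ starts from a single vertex, not from about $n^{1/3}$ active vertices. We handle this by restarting a genuine $\GG(n,p)$ exploration. The giant of $\GG^{(i)}$ w.h.p. contains all $\ge\alpha n^{1/4}$ vertices of $A^{(i)}_{t_0}$, because all non-giant components have size $O(\ln n)$ by Theorem~\ref{thm:1}, and w.h.p. the $\alpha n^{1/4}$ active vertices do not all lie in small components. Hence the exploration time of the giant of $\GG^{(i)}$ and the time our coupled walk $|A^{(i)}_t|$ first hits zero differ by at most $|S^{(i)}_{t_0}|+|A^{(i)}_{t_0}|+O(\ln n)\cdot(\text{number of small components touched})=O(n^{1/3})=o(\sqrt n)$. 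This gives $\tau_n+\eta_n'=\min(C^{(1)},C^{(2)})$ with $\eta_n'=o_{\Pr}(\sqrt n)$.

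\emph{Step 3 (the residual).} At time $\tau_n$ one part, say part $i$, has active set emptied. The active set $Z_n=|A^{(3-i)}_{\tau_n}|$ in the other part coincides with the number of active vertices in the coupled exploration of $\GG^{(3-i)}$ at the moment $\GG^{(i)}$'s giant is finished, up to the same $O(n^{1/3})$ shift. This quantity is $Y_n$, so $\eta_n''=o_{\Pr}(\sqrt n)$.

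The main obstacle is Step 2. The time index of the bipartite process must correspond to a common clock in both $\GG(n,p)$ explorations, and a perturbation of $O(n^{1/3})$ in the starting configuration must shift the hitting time of zero only by $o(\sqrt n)$. To control this I would use the standard fact that, in the supercritical regime, $|A_t|$ has positive drift $c(1-\beta')-1$ until near $\beta n$. So a shift in the starting height of size $h$ changes the hitting time by $O(h)$, and the walk stays far from zero in between. This is the Bollob\'as--Riordan concentration (Freedman's inequality on the martingale part).
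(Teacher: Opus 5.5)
Your strategy is the paper's: from $t_0$ on, treat the two halves of the bipartite exploration as explorations of two independent $\GG(n,p)$'s, and argue that the $O(n^{1/3})$ discrepancy in the starting configuration is invisible at scale $\sqrt n$. The paper's proof is essentially just this remark.

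Some of your added details are wrong, though.

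\emph{The giant does not contain all of $A^{(i)}_{t_0}$.} In Step~2 your $\GG^{(i)}$ has lost the pre-$t_0$ connections among these vertices, since those connections ran through the other part. The vertex $v^{(i)}_{t_0}$ processed at step $t_0+1$ is isolated in $\GG^{(i)}$ with probability $\approx e^{-c}$: its query to $U^{(i)}_{t_0}$ returns nothing, and its other pairs are fresh coins to $O(n^{1/3})$ vertices. Under FIFO order a positive fraction of $A^{(i)}_{t_0}$ is isolated. Your error term happens to absorb this, and the clean argument needs no drift at all:
\begin{itemize}
\item conditionally on $\mathcal F_{t_0}$, $H=\GG^{(i)}[A^{(i)}_{t_0}\cup U^{(i)}_{t_0}]$ is a $\GG(n-|S^{(i)}_{t_0}|,p)$;
\item the set processed from $t_0$ until $|A^{(i)}_t|$ vanishes is exactly the union $E^{(i)}$ of the components of $H$ meeting $A^{(i)}_{t_0}$;
\item by exchangeability, the giant of $H$ meets $A^{(i)}_{t_0}$ w.h.p.;
\item adding back $S^{(i)}_{t_0}$ changes the giant by $O(n^{1/4}\ln^2 n)$.
\end{itemize}
Hence $\tau_n=t_0+\min_i|E^{(i)}|$ and $|\eta_n'|=O(n^{1/3}\ln n)$ w.h.p.

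\emph{The drift claim has the wrong sign.} $c|U_t|/n-1$ turns negative once the seen fraction exceeds $1-1/c<\beta$, and it equals $c(1-\beta)-1<0$ at the end of the giant. It is this negative drift that makes a hitting time Lipschitz in the starting height.

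The genuine gap is Step~3. Component sizes do not depend on the exploration order, which is why the same-graph coupling works for $\tau_n$; active counts at a given time do. $Z_n$ is the active count at time $\tau_n$ of the exploration of $\GG^{(3-i)}$ started from $A^{(3-i)}_{t_0}$. $Y_n$ refers to explorations of $\GG^{(1)},\GG^{(2)}$ themselves. On the same graph these walks are not time-shifts of each other, so ``up to the same $O(n^{1/3})$ shift'' is unjustified. If you instead define $Y_n$ through your coupled exploration, Step~3 is a tautology, but the issue reappears when $Y_n$ must be related to $|C^{(1)}-C^{(2)}|$ in Lemma~\ref{lem:1}.

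The missing ingredient is a linearization at the end of an exploration, valid for every order.
\begin{itemize}
\item Since the walk ends at $0$, the active count when $k$ vertices remain to be processed is $k$ minus the number discovered in the last $k$ steps.
\item Each of these steps discovers $\mathrm{Bin}(|U_t|,p)$ vertices with $|U_t|=(1-\beta)n+O_{\Pr}(\sqrt n)$.
\item A maximal inequality for the martingale part over all windows of length at most $K\sqrt n$ is needed, because ``the last $k$ steps'' is defined by the future. It gives active count $(1-c(1-\beta))k+o_{\Pr}(\sqrt n)$, uniformly in $k\le K\sqrt n$.
\end{itemize}
Apply this to both explorations of $\GG^{(3-i)}$, together with $|C^{(1)}-C^{(2)}|=O_{\Pr}(\sqrt n)$ from Theorem~\ref{thm:2} and your Step~2 bound. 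Then $Z_n$ and $Y_n$ both equal $(1-c(1-\beta))|C^{(1)}-C^{(2)}|+o_{\Pr}(\sqrt n)$, so $\eta_n''=o_{\Pr}(\sqrt n)$. This is the same relation the paper extracts from the argument of Proposition~\ref{prop:4} in the proof of Lemma~\ref{lem:1}.

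Alternatively, you could couple the walks rather than the graphs: thin $|U_t|$ with shared coins so that the walks stay $O(n^{1/3})$ apart, then use the negative end-drift. That is presumably what your obstacle paragraph has in mind, but it is a different coupling from the one built in Step~2 and would have to be set up.
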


	Finally, we are ready to derive \Lem~\ref{lem:1}.

	\subsection{Proof of \Lem~\ref{lem:1}}
	First observe that \Prop~\ref{prop:3} implies that with high probability
	\[
	|L_1(n) - \sigma_n - 2\tau_n| \le 2t_0 = O(n^{1/4}),
	\]
	as the left-hand side does not exceed the total number of vertices considered by time $t_0$.
	Using \Prop~\ref{prop:4} we obtain
	\[
	\left|L_1(n) - Z_n \cdot \frac{1}{1-c(1-\beta)} - 2\tau_n\right| = o_{\Pr}(\sqrt{n}).
	\]
	Next, \Prop~\ref{prop:5} implies that, up to an error of order $o_{\Pr}(\sqrt{n})$, the random variable $L_1(n)$ coincides in distribution with
	\[
	Y_n \cdot \frac{1}{1-c(1-\beta)} + 2\min(C^{(1)}(n), C^{(2)}(n)).
	\]
	Finally, note that from the proof of \Prop~\ref{prop:4} (see below) it follows that
	\[
	\left| Y_n \cdot \frac{1}{1-c(1-\beta)} - |C^{(1)}(n) - C^{(2)}(n)| \right| = o_{\Pr}(\sqrt{n}),
	\]

	and since
	\[
	|C^{(1)}(n) - C^{(2)}(n)| + 2\min(C^{(1)}(n), C^{(2)}(n)) = C_1(n) + C_2(n),
	\]
	the proof of \Lem~\ref{lem:1} is complete.

	\subsection{Proofs of Propositions~\ref{prop:1}--\ref{prop:5}}

	\begin{proof}[Proof of \Prop~\ref{prop:1}]
		The proof of \Thm~\ref{thm:3} shows that each vertex belongs to the giant component with probability $\beta(1+o(1))$.
		Assume that by time $t_1 = \lceil (\ln n)^2 \rceil$ we have not started exploring the giant component yet.
		Then \Thm~\ref{thm:3} implies that with high probability we have only explored components of size at most $B \cdot \ln n$, and hence by time $t_1$ we have fully explored at least $B^{-1} \cdot \ln n$ components.

		Each time we start a new component, there are at least $n(1 + o(1))$ unseen vertices left in each part, so the conditional probability of starting the giant component equals $\beta(1+o(1))$ regardless of the past. Hence the probability that none of the first $B^{-1} \cdot \ln n$ components is the giant one is at most
		\[
		(1 - \beta(1+o(1)))^{B^{-1}\cdot \ln n} \to 0, \quad n \to \infty.
		\]
	\end{proof}

	\begin{proof}[Proof of \Prop~\ref{prop:2}]
		Set $t_1 = \lceil (\ln n)^2 \rceil$, $t_2 = \lceil n^{1/4} \rceil$.
		According to \Prop~\ref{prop:1} it is sufficient to consider the event that the exploration of the giant component starts before time $t_1$.
		First, let us provide a crude upper bound on the number of active vertices at time $t_1$.
		Clearly, the number of active vertices added at each step is stochastically dominated by  $\mathrm{Bin}(n,p)$.
		Thus
		\[
		\mathbb{E}|A_{t_1}^{(i)}| \le np\cdot t_1 = O(t_1).
		\]
		Hence $\Pr(|A_{t_1}^{(1)} \cup A_{t_1}^{(2)}| \ge \lfloor n^{1/5} \rfloor) \to 0$ as $n \to \infty$.

		Instead of the parallel procedure, consider the exploration process in which at each step we process a single active vertex in only one of the parts.
		Let us estimate the probability that after $t \in [t_2, 2t_2]$ such steps the total number of active vertices is small.
		Specifically, we bound the probability that it is at most $(c-1)t/2$.
		Note that the probability of such event does not exceed the probability that
		\[
		\left\{ \sum_{j=t_1}^{t} W_j \le \frac{c-1}{2} t + t \right\},
		\]
		where $W_j$ are independent binomial random variables $\mathrm{Bin}\left(n - t_1 - \lfloor n^{1/5} \rfloor - t - \left\lceil \frac{c-1}{2} t \right\rceil, \, p\right)$, as the number of new neighbours of the current active vertex stochastically dominates such a $W_j$, and we remove one active vertex at each step.
		As a consequence, we have the following upper bound:
		\[
		\Pr\left( \sum_{j=t_1}^{t} W_j \le \frac{c+1}{2} t \right) = \Pr\left( \sum_{j=t_1}^{t} (W_j - \mathbb{E}W_j) \le \frac{1-c}{2} t (1+o(1)) \right) \le
		\]
		(Chernoff bound)
		\[
		\le \exp\left\{ -\frac{(1-c)^2 t^2 (1+o(1))}{8ct} \right\} = \exp\left\{ -\frac{(1-c)^2}{8c} t (1+o(1)) \right\} \le
		\]
		\[
		\le \exp\left\{ -\frac{(1-c)^2}{8c} n^{1/4} (1+o(1)) \right\} = o\left(\frac{1}{n}\right).
		\]
		Thus with high probability at any time $t \in [t_2, 2t_2]$ we will have at least $(c-1)t/2$ active vertices in the ``slow'' exploration process.
		For the parallel exploration process this means that at time $t_2$ with high probability $|A_{t_2}^{(1)} \cup A_{t_2}^{(2)}| \ge (c-1)t_2/2$.
		Without loss of generality assume that $|A_{t_2}^{(1)}| \ge (c-1)t_2/4$.
		Therefore, the same argument as above implies that after additional $(c-1)t_2/8$ steps of the exploration process there will be at least $(c-1)t_2/8$ active vertices left in the first part, and at least $(c-1)^2 t_2/16$ active vertices in the second one.
	\end{proof}

	\begin{proof}[Proof of \Prop~\ref{prop:3}]
		First we estimate the expected value of $|A_{t_0}^{(i)}|$.
		Note that the number of added active vertices at each step is stochastically dominated by $\mathrm{Bin}(n,p)$.
		Therefore,
		\[
		\mathbb{E}|A_{t_0}^{(i)}| \le np \cdot t_0 = O(n^{1/4}).
		\]
		Hence Markov's inequality implies that $\Pr(|A_{t_0}^{(i)}| \ge n^{1/3}) \to 0$ as $n \to \infty$.
	\end{proof}

	\begin{proof}[Proof of \Prop~\ref{prop:4}]
		First let us understand how the situation looks at time $\tau_n$, when one of the parts has no active vertices anymore, while the other one still does.
		There are $Z_n$ active vertices left, and without loss of generality we assume that they belong to $V_n^{(1)}$.
		\Prop~\ref{prop:5} implies that $Z_n = O_{\Pr}(\sqrt{n})$.
		Hence in each part there are $(1-\beta)n(1+o(1))$ unexplored vertices left.
		Since $c(1-\beta) < 1$, we are in the situation, where from each of $Z_n$ active vertices in $V_n^{(1)}$ we start a component exploration process in a subcritical bipartite random graph.

		According to \Thm~\ref{thm:3}, each of these components will be at most logarithmic in size, so at every step each active vertex has $(1-\beta)n(1+o(1))$ unseen vertices in the opposite part as potential neighbours
		Thus $\sigma_n$ is the sum of the sizes of the explored components in a random bipartite graph $\GG((1-\beta)n(1+o(1)), (1-\beta)n(1+o(1)), p)$.
		We write
		\[
		\sigma_n = \sum_{j=1}^{Z_n} \xi_j(n),
		\]
		where $\xi_j(n)$ is the size of the component explored after time $\tau_n$ starting from the $j$th active vertex.
		Therefore, uniformly over all $j \le n^{2/3}$,
		\[
		\lim_{n \to \infty} \mathbb{E}\xi_j(n) = \frac{1}{1-c(1-\beta)}.
		\]
		Next we use the independence of the $\xi_j(n)$ and $Z_n$, as well as Chebyshev's inequality:
		\[
		\Pr\left( \left| \sigma_n - \sum_{j=1}^{Z_n} \mathbb{E}\xi_j(n) \right| > \varepsilon \sqrt{n} \;\middle|\; Z_n = x \right) \le \frac{\sum_{j=1}^{x} \Var\xi_j(n)}{\varepsilon^2 n} = O\left(\frac{x}{n}\right)
		\]
		uniformly over all $x \le n^{2/3}$.
		Hence
		\[
		\frac{\sigma_n - \sum_{j=1}^{Z_n} \mathbb{E}\xi_j(n)}{\sqrt{n}} \ra 0
		\]
		in probability.
		Finally, note that since $Z_n = O_{\Pr}(\sqrt{n})$,
		\[
		\frac{Z_n \cdot \frac{1}{1-c(1-\beta)} - \sum_{j=1}^{Z_n} \mathbb{E}\xi_j(n)}{\sqrt{n}} \ra 0
		\]
		in probability.
	\end{proof}

	\begin{proof}[Proof of \Prop~\ref{prop:5}]
		Starting from time $t_0$ the parallel component exploration will be identical to the simultaneous exploration of the giant components of two independent random graphs $\GG^{(i)}(n,p)$, $i=1,2$.
		The difference lies in the $O(n^{1/3})$ starting vertices, but \Thm~\ref{thm:2} holds whenever $np \to c$.
		Therefore, these extra vertices do not change the limiting distribution (as we only care about precision of order $o(\sqrt{n})$).
		As a consequence, for some random variables $\eta_n'$, $\eta_n''$ of order $o_{\Pr}(\sqrt{n})$, the following holds:
		\[
		(\tau_n + \eta_n', \; Z_n + \eta_n'') \overset{d}{=} \left(\min(C^{(1)}(n), C^{(2)}(n)), \; Y_n\right).
		\]
	\end{proof}

	The proof of \Prop~\ref{prop:5} completes the proof of \Thm~\ref{thm:4}.


\begin{thebibliography}{99}
		\bibitem{ErdosRenyi} P.~Erd\H{o}s, A.~R\'enyi: On the Evolution of Random Graphs. Publication of the Mathematical Institute of the Hungarian Academy of Sciences {\bf 5} (1960) 17--61.

		\bibitem{Stepanov} V.E.~Stepanov: On the Probability of Connectedness of a Random Graph $\mathcal{G}_m(t)$. Theory Probab. Appl. {\bf 15} (1970) 55--67.

		\bibitem{BollobasRiordanWalk} B.~Bollob\'as, O.~Riordan: Asymptotic normality of the size of the giant component via a random walk. Journal of Combinatorial Theory, Series B {\bf 102} (2012) 53--61.

		\bibitem{Aldous} D.~Aldous: Brownian excursions, critical random graphs and the multiplicative coalescent. The Annals of Probability {\bf 25} (1997) 812--854.

		\bibitem{SchmidtPruzanShamir} J.~Schmidt-Pruzan, E.~Shamir: Component structures in the evolution of random hypergraphs. Combinatorica {\bf 5} (1985) 81--94.

		\bibitem{FriezeKrivelevichMartin} A.~Frieze, M.~Krivelevich, R.~Martin: The emergence of a giant component in random subgraphs of pseudo-random graphs. Random Structures and Algorithms {\bf 24} (2004) 42--50.

		\bibitem{AjtaiKomlosSzemeredi} M.~Ajtai, J.~Koml\'os, E.~Szemer\'edi: Largest random component of a $k$-cube. Combinatorica {\bf 2} (1982) 1--7.

		\bibitem{BehrischCojaOghlanKang} M.~Behrisch, A.~Coja-Oghlan, M.~Kang: The order of the giant component of random hypergraphs. Random Structures and Algorithms {\bf 36} (2010) 149--184.

		\bibitem{BollobasRiordanHypergraph} B.~Bollob\'as, O.~Riordan: Asymptotic Normality of the Size of the Giant Component in a Random Hypergraph. Random Structures and Algorithms {\bf 41} (2012) 441--450.

		\bibitem{Johansson} T.~Johansson: The giant component of the random bipartite graph. Master thesis in Engineering and Computational Science (2012).

		\bibitem{DoErdeKang} T.A.~Do, J.~Erde, M.~Kang, M.~Missethan: Component behaviour and excess of random bipartite graphs near the critical point. Electron. J. Comb. {\bf 30 (3)} (2023).

		\bibitem{Wang} M.~Wang: Large random intersection graphs inside the critical window and triangle counts. Electron. J. Probab. {\bf 30} (2025) 1--63.

		\bibitem{Clancy} D.~Clancy Jr.: A central limit theorem for the giant in a stochastic block model. arXiv:2501.01351 (2025).

		\bibitem{ZakharovShabanov} P.~Zakharov, D.~Shabanov: Asymptotic normality of the giant component size in a random bipartite graph. Trudy MIPT {\bf 15} (2023) 23--32 (in Russian).

	\end{thebibliography}
\end{document}